\newtheorem{theorem}{Theorem}[section]
\newtheorem{remark}[theorem]{Remark}
\newtheorem{lemma}[theorem]{Lemma}
\begin{document}

\title[Congruences for the Fishburn Numbers]{Congruences for the Fishburn Numbers}
\author[G. E. Andrews]{George E. Andrews}
\address{Department of Mathematics, Penn State University, University Park, PA  16802, USA, gea1@psu.edu}
\author[J. A. Sellers]{James A. Sellers}
\address{Department of Mathematics, Penn State University, University Park, PA  16802, USA, sellersj@psu.edu}
\thanks{George E. Andrews was partially supported by NSA grant H98230--12--1--0205}

\date{\today}

\begin{abstract}
The Fishburn numbers, $\xi(n),$ are defined by a formal power series expansion
$$
\sum_{n=0}^\infty \xi(n)q^n = 1 + \sum_{n=1}^\infty \prod_{j=1}^n (1-(1-q)^j).
$$
For half of the primes $p$, there is a non--empty set of numbers $T(p)$ lying in $[0,p-1]$ such that if $j\in T(p),$ then for all $n\geq 0,$ 
$$
\xi(pn+j)\equiv 0 \pmod{p}.
$$
\end{abstract}

\maketitle


\noindent 2010 Mathematics Subject Classification: 05A19, 11F20, 11P83
\bigskip

\noindent Keywords: Fishburn numbers, interval orders, $(2+2)$-free posets, ascent sequences, congruences, Bernoulli polynomials

\section{Introduction}
\label{intro}
The Fishburn numbers $\xi(n)$ are defined by the formal power series 
\begin{equation}
\label{Fishburn_genfn1}
\sum_{n=0}^\infty \xi(n)q^n = \sum_{n=0}^\infty (1-q;1-q)_n
\end{equation}
where 
\begin{equation}
\label{qq_notation}
(A;q)_n = (1-A)(1-Aq)\dots (1-Aq^{n-1}).
\end{equation}

The Fishburn numbers have arisen in a wide variety of combinatorial settings.  One can gain some sense of the extent of their applications in \cite[Sequence A022493]{Sl}.  Namely, these numbers arise in such combinatorial settings as linearized chord diagrams, Stoimenow diagrams, nonisomorphic interval orders, unlabeled $(2+2)$-free posets, and ascent sequences.  They were first defined in the work of Fishburn (cf. \cite{F1, F2, F3}), and have recently found a connection with mock modular forms \cite{BOPR}.  

It turns out that the Fishburn numbers satisfy congruences reminiscent of those for the partition function $p(n)$ \cite[Chapter 1]{A}.  Surprisingly, in contrast to $p(n),$ we shall see in Section \ref{identify_primes} that there are congruences of the form $\xi(pn+b)\equiv 0 \pmod{p}$ for half of all the primes $p.$   For example, for all $n\geq 0,$  
\begin{align}
  \xi(5n+3) &\equiv \xi(5n+4)\equiv 0 \pmod{5},  \label{mod5congs} \\
  \xi(7n+6) &\equiv 0 \pmod{7},  \label{mod7congs} \\
  \xi(11+8) &\equiv \xi(11n+9)\equiv \xi(11n+10)\equiv 0 \pmod{11}, \label{mod11congs}\\
  \xi(17n+16) &\equiv 0 \pmod{17}, \text{\ \ and} \label{mod17congs} \\
  \xi(19n+17) &\equiv \xi(19n+18)\equiv 0 \pmod{19}. \label{mod19congs}
\end{align}
These results all follow from a general result stated as Theorem \ref{mainthm} in Section \ref{MainThm}.   The next section is devoted to background lemmas.  Theorem \ref{mainthm} is then proved in Section  \ref{MainThm}.  In Section \ref{identify_primes} we discuss an infinite family of primes $p$ for which these congruences hold.  We conclude with some open problems.  

\section{Background Lemmas}
\label{background}

The sequence of pentagonal numbers is given by 
\begin{equation}
\label{pentagonal_numbers}
\left\{  n(3n-1)/2 \right\}_{n=-\infty}^\infty = \left\{0,1,2,5,7,12,15,22,\dots\right\}.
\end{equation}
Throughout this work the symbol $\lambda$ will be used to designate a pentagonal number.  

In our first lemma, $f(q)$ will denote an arbitrary polynomial in ${\Bbb Z}[q],$ and $p$ will be a fixed prime.  Then we separate the terms in $f(q)$ according to the residue of the exponent modulo $p.$   Thus, 
\begin{equation}
\label{poly_dissection1}
f(q) = \sum_{i=0}^{p-1} q^i\phi_i(q^p).
\end{equation}

We also suppose that for every $p^{th}$ root of unity $\zeta$ (including $\zeta=1$), 
$$f(\zeta) = \sum_{\lambda} c_{\lambda}\zeta^\lambda$$
where the $\lambda$'s sum over some set of pentagonal numbers that includes 0.  The $c$'s are thus defined to be 0 outside this prescribed set of pentagonal numbers, and the $c$'s are independent of the choice of $\zeta.$  

\begin{lemma}
\label{lemma1}
Under the above conditions, $\phi_j(1) = 0$ if $j$ is not a pentagonal number.
\end{lemma}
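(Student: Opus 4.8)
\emph{The plan} is to recover $\phi_j(1)$ as the coefficient picked out by the $p$-th roots of unity and then to feed in the hypothesized pentagonal expansion of $f$, using the orthogonality relation $\sum_{\zeta^p=1}\zeta^m = p$ when $p\mid m$ and $0$ otherwise.

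First I would observe that every $p$-th root of unity $\zeta$ satisfies $\zeta^p=1$, so evaluating the dissection \eqref{poly_dissection1} at $q=\zeta$ collapses each $\phi_i(q^p)$ to the constant $\phi_i(1)$, giving $f(\zeta)=\sum_{i=0}^{p-1}\phi_i(1)\zeta^i$. Multiplying by $\zeta^{-j}$ and summing over all $p$ of the $p$-th roots of unity --- this is exactly where the hypothesis's clause ``including $\zeta=1$'' is used, since the orthogonality relation requires the complete sum --- yields
$$
p\,\phi_j(1) \;=\; \sum_{\zeta^p=1}\zeta^{-j}f(\zeta).
$$
Now substitute $f(\zeta)=\sum_\lambda c_\lambda\zeta^\lambda$ and interchange the two finite sums:
$$
p\,\phi_j(1) \;=\; \sum_\lambda c_\lambda\sum_{\zeta^p=1}\zeta^{\lambda-j} \;=\; p\!\!\sum_{\substack{\lambda\ \mathrm{pentagonal}\\ \lambda\equiv j\ (\mathrm{mod}\ p)}}\!\! c_\lambda ,
$$
the inner sum being $p$ precisely when $\lambda\equiv j\pmod p$. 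Cancelling $p$ gives $\phi_j(1)=\sum_{\lambda\equiv j\,(p)}c_\lambda$, summed over the prescribed pentagonal numbers in the residue class of $j$.

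It remains to see that this vanishes when $j$ is not pentagonal. Since $0\le j\le p-1$ and the pentagonal numbers carrying a nonzero $c_\lambda$ lie in $[0,p-1]$, the only $\lambda$ that can occur in that sum is $\lambda=j$ itself; so for non-pentagonal $j$ the sum is empty and $\phi_j(1)=0$, as claimed. (Equivalently, the heart of the argument can be packaged as: the polynomial $\sum_i(\phi_i(1)-d_i)q^i$, with $d_i=\sum_{\lambda\equiv i\,(p)}c_\lambda$, has degree $\le p-1$ and vanishes at all $p$ roots of $q^p-1$, hence is identically zero.) Every step here is routine except this last translation; the one genuine point of care is the residue bookkeeping --- that a non-pentagonal residue class really picks up no $c_\lambda$ --- which rests on the prescribed pentagonal numbers all lying below $p$.
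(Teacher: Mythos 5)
Your derivation is correct in its main computation but proceeds by a genuinely different route from the paper. The paper avoids orthogonality: it substitutes $1=-\zeta-\zeta^2-\cdots-\zeta^{p-1}$ using the fact that $1,\zeta,\dots,\zeta^{p-2}$ is an integral basis of the ring of integers of $\mathbb{Q}(\zeta)$, extracts $p-1$ linear relations among the $\phi_j(1)$ from a primitive root $\zeta$, adjoins the $\zeta=1$ equation, and notes that the resulting $p\times p$ system has determinant $p$, forcing the ``obvious'' solution. Your averaging argument $p\,\phi_j(1)=\sum_{\zeta^p=1}\zeta^{-j}f(\zeta)$ reaches the same identity $\phi_j(1)=\sum_{\lambda\equiv j\ (\mathrm{mod}\ p)}c_\lambda$ more directly, and it makes transparent exactly where the two hypotheses enter (the $c_\lambda$ are independent of $\zeta$, and $\zeta=1$ must be included so the full orthogonality sum is available); it is arguably the cleaner proof.

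The one step you must repair is the final localization. You assert that the designated pentagonal numbers with $c_\lambda\neq 0$ all lie in $[0,p-1]$, so that only $\lambda=j$ can contribute to the sum. That is not among the hypotheses, and in the actual application (Lemma \ref{lemma5}, via Zagier's formula (\ref{eqn2.16})) the exponents $\lambda=m(3m\pm1)/2$ do exceed $p$. Fortunately you do not need this claim: the statement that is actually used later is that $\phi_j(1)=0$ whenever $j$ is not congruent modulo $p$ to any designated pentagonal number (i.e., $j\notin S(p)$ in the notation of (\ref{eqn2.12})), and that follows immediately from your identity, since the sum $\sum_{\lambda\equiv j\ (\mathrm{mod}\ p)}c_\lambda$ is then empty. (The lemma's phrase ``$j$ is not a pentagonal number'' must be read in this mod-$p$ sense; for instance $4\in S(11)$ although $4$ itself is not pentagonal, and for such $j$ neither your argument nor the paper's yields vanishing.) Deleting the claim about $\lambda\in[0,p-1]$ and stating the conclusion for $j\notin S(p)$ makes your proof complete.
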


\begin{proof}
The assertion is not immediate because the $p^{th}$ roots of unity are not linearly independent.  In particular, if $\zeta$ is a primitive $p^{th}$ root of unity, then 
$$
1+\zeta + \zeta^2 +\dots + \zeta^{p-1} = 0.
$$
However, we know that the ring of integers in ${\Bbb Q}(\zeta)$ has $1, \zeta, \zeta^2, \dots, \zeta^{p-2}$ as a basis \cite[page 187]{AW}.  Hence, 
$$
\phi_0(1)(-\zeta - \zeta^2-\dots - \zeta^{p-1}) + \sum_{j=1}^{p-1}\zeta^j\phi_j(1) = c_0(-\zeta - \zeta^2-\dots - \zeta^{p-1})+\sum_{\lambda\neq 0}c_\lambda \zeta^\lambda.
$$
Therefore, if $1\leq j\leq p-1,$
$$
\phi_j(1) - \phi_0(1) = 
\begin{cases}
c_\lambda - c_0 & \text{if $j$ is one of the designated pentagonal numbers},\\
-c_0 & \text{otherwise}
\end{cases}
$$
is a linear system of $p-1$ equations in $p$ variables $\phi_j(1),$ $0\leq j\leq p-1.$   However, the $\zeta = 1$ case adds one further equation 
$$
\phi_0(1) + \phi_1(1) + \dots + \phi_{p-1}(1) = \sum_\lambda c_\lambda.
$$
We now have a linear system of $p$ equations in $p$ variables, and the determinant of the system is $p.$  Hence, there is a unique solution which is the obvious solution
\begin{equation*}
\phi_j(1)=
\begin{cases}
c_\lambda & \text{if $j$ is one of the designated pentagonal numbers},\\
0 & \text{otherwise}.
\end{cases}
\end{equation*}
\end{proof}

In the next three lemmas, we require some variations on Leibniz's rule for taking the $n^{th}$ derivative of a product.  Each is probably in the literature, but is included here for completeness. 

\begin{lemma}
\label{lemma2}
$$
\left(q\frac{d}{dq}\right)^n (A(q)B(q)) = \sum_{j=1}^n q^jc_{n,j}\left(\frac{d}{dq}\right)^j(A(q)B(q)),
$$
where the $c_{n,j}$ are the Stirling numbers of the second kind given by $c_{n,0} = c_{n,n+1} = 0,$ $c_{1,1}=1,$ and $c_{n+1,j} = jc_{n,j}+c_{n,j-1}$ for $1\leq j\leq n+1.$   
\end{lemma}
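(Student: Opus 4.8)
The plan is to prove the identity by induction on $n$, observing first that the product structure $A(q)B(q)$ is irrelevant: the assertion is really the operator identity
$$\left(q\frac{d}{dq}\right)^n F(q) = \sum_{j=1}^n q^j c_{n,j}\left(\frac{d}{dq}\right)^j F(q),$$
valid for any polynomial (or sufficiently differentiable function) $F$, applied in the case $F(q) = A(q)B(q)$. So I would establish it at that level of generality. The base case $n=1$ is immediate: $\left(q\frac{d}{dq}\right)F = q F'$, and the right-hand side consists of the single term $q^1 c_{1,1} F'$ with $c_{1,1}=1$.

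For the inductive step, assume the formula holds for $n$. Apply $q\frac{d}{dq}$ to $\sum_{j=1}^n q^j c_{n,j} F^{(j)}$ and use the ordinary product rule in the form $q\frac{d}{dq}\bigl(q^j F^{(j)}\bigr) = j\,q^j F^{(j)} + q^{j+1} F^{(j+1)}$. This produces two sums; after shifting the index in the second one, the coefficient of $q^j F^{(j)}$ in the total is $j\,c_{n,j} + c_{n,j-1}$, which is precisely $c_{n+1,j}$ by the stated recurrence. One then checks the two extreme indices: the $j=n+1$ term arises only from the shifted sum and equals $c_{n,n} = c_{n+1,n+1}$ (using $c_{n,n+1}=0$), while the $j=1$ term equals $c_{n,1} = c_{n+1,1}$ (using $c_{n,0}=0$). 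This yields the formula for $n+1$ and closes the induction.

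The only point requiring care is the reindexing of the two sums generated by the product rule and the attendant boundary bookkeeping at $j=1$ and $j=n+1$; there is no genuine obstacle here, since the computation reproduces exactly the defining recurrence $c_{n+1,j} = j\,c_{n,j} + c_{n,j-1}$ of the Stirling numbers of the second kind. It is this recurrence that makes the Stirling numbers the natural coefficients for expanding powers of the Euler operator $q\,\frac{d}{dq}$ in terms of the ordinary derivatives $\left(\frac{d}{dq}\right)^j$.
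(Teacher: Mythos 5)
Your proof is correct and follows essentially the same route as the paper: induction on $n$, applying $q\frac{d}{dq}$ to the inductive expression, using the product rule, and reindexing so that the coefficient of $q^j F^{(j)}$ becomes $jc_{n,j}+c_{n,j-1}=c_{n+1,j}$. Your observation that the product structure $A(q)B(q)$ is irrelevant is a harmless (and accurate) generalization that the paper itself implicitly uses later when it sets $B(q)=1$ in the proof of Lemma \ref{lemma3}.
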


\begin{proof}
The result is a tautology when $n=1.$  To pass from $n$ to $n+1,$ we note 
\begin{eqnarray*}
\left(q\frac{d}{dq}\right)^{n+1} (A(q)B(q))
&=& 
q\frac{d}{dq}\left( \left(q\frac{d}{dq}\right)^n (A(q)B(q)) \right) \\
&=& 
q\frac{d}{dq}\sum_{j=1}^n q^jc_{n,j}\left(\frac{d}{dq}\right)^j(A(q)B(q))\\
&=& 
q\frac{d}{dq} \sum_{j=1}^nq^jc_{n,j}\left(\frac{d}{dq}\right)^j(A(q)B(q)) \\
&=& 
\sum_{j=1}^n jq^jc_{n,j}\left(\frac{d}{dq}\right)^j(A(q)B(q)) \\
&& 
\qquad
+ \sum_{j=1}^n q^{j+1}c_{n,j}\left(\frac{d}{dq}\right)^{j+1}(A(q)B(q))\\
&=& 
\sum_{j=1}^{n+1} q^j(jc_{n,j}+c_{n,j-1})\left(\frac{d}{dq}\right)^j(A(q)B(q)) \\
&=& 
\sum_{j=1}^{n+1} q^jc_{n+1,j}\left(\frac{d}{dq}\right)^j(A(q)B(q)).
\end{eqnarray*}
\end{proof}

\begin{lemma}
\label{lemma3}
\begin{equation*}
\left(\frac{d}{dt}\right)^n f(qe^t)\biggr\rvert_{t=0} = \left(q\frac{d}{dq}\right)^n f(q).  
\end{equation*}
\end{lemma}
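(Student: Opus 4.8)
The statement to prove is Lemma~\ref{lemma3}: that the $n$-th derivative with respect to $t$ of $f(qe^t)$, evaluated at $t=0$, equals $(q\,d/dq)^n f(q)$.

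\medskip

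The plan is to proceed by induction on $n$, mirroring the structure already used in the proof of Lemma~\ref{lemma2}. The base case $n=0$ is trivial, and $n=1$ follows from the chain rule: $\frac{d}{dt} f(qe^t) = qe^t f'(qe^t)$, which at $t=0$ becomes $q f'(q) = (q\,d/dq) f(q)$. The key observation making the induction work is that the chain rule gives, for \emph{any} smooth function $g$, the identity $\frac{d}{dt}\, g(qe^t) = \bigl(q\tfrac{d}{dq} g\bigr)(qe^t)$ — that is, differentiating in $t$ and then substituting $q \mapsto qe^t$ is the same as first applying the operator $q\,d/dq$ to $g$ and then substituting. This is the substitution-invariance of the Euler operator $q\,d/dq$ under the scaling $q\mapsto qe^t$.

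\medskip

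Carrying out the induction step: assume $\left(\frac{d}{dt}\right)^n f(qe^t) = \bigl((q\tfrac{d}{dq})^n f\bigr)(qe^t)$ as an identity in both $q$ and $t$ (not yet setting $t=0$). Applying $\frac{d}{dt}$ once more and using the boxed chain-rule observation above with $g = (q\,d/dq)^n f$, we get $\left(\frac{d}{dt}\right)^{n+1} f(qe^t) = \bigl((q\tfrac{d}{dq})^{n+1} f\bigr)(qe^t)$. Setting $t=0$ then yields the claim for $n+1$. So the induction hypothesis should be stated in the stronger ``unevaluated'' form so that it is stable under a further $t$-derivative; the specialization $t=0$ is only performed at the very end.

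\medskip

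I do not expect a genuine obstacle here — the lemma is essentially a clean encoding of the chain rule. The one point requiring a little care is the bookkeeping just mentioned: one must induct on the identity valid for all $t$, rather than on the identity at $t=0$ alone, since otherwise there is nothing to differentiate. Everything else is a routine application of the chain rule, and since $f$ is a polynomial (indeed the lemma will only ever be applied to polynomials, as in Lemma~\ref{lemma2}), there are no convergence or smoothness issues to worry about.
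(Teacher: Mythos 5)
Your proof is correct, but it takes a genuinely different route from the paper's. The paper proves the lemma by expanding both sides explicitly in terms of ordinary derivatives of $f$: it invokes Lemma~\ref{lemma2} (with $B(q)=1$) to write $\left(q\frac{d}{dq}\right)^n f(q) = \sum_{j=1}^n q^j c_{n,j} f^{(j)}(q)$, then proves by a separate induction that $\left(\frac{d}{dt}\right)^n f(qe^t) = \sum_{j=1}^n q^j e^{jt} c_{n,j} f^{(j)}(qe^t)$, and matches the two expansions at $t=0$ by observing that the same Stirling numbers of the second kind $c_{n,j}$ appear in both. You instead bypass the Stirling-number bookkeeping entirely via the intertwining identity $\frac{d}{dt}\,g(qe^t) = \bigl(q\frac{d}{dq}g\bigr)(qe^t)$, which lets a single induction carry the unevaluated identity $\left(\frac{d}{dt}\right)^n f(qe^t) = \bigl((q\frac{d}{dq})^n f\bigr)(qe^t)$ before specializing to $t=0$. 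Your version is shorter and conceptually cleaner (it exhibits $t\mapsto qe^t$ as conjugating $\frac{d}{dt}$ into the Euler operator), and it makes the appeal to Lemma~\ref{lemma2} unnecessary for this purpose; the paper's version has the minor side benefit of recording the explicit expansion in terms of $c_{n,j}$, though that expansion is not used elsewhere in the paper. Your cautionary remark about inducting on the identity valid for all $t$, rather than only at $t=0$, is exactly the right point of care.
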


\begin{proof}
By Lemma \ref{lemma2} with $A(q) = f(q)$ and $B(q) = 1,$ we see that 
\begin{equation}
\label{eqn2.3}
\left(q\frac{d}{dq}\right)^n f(q) = \sum_{j=1}^n q^jc_{n,i}f^{(j)}(q).
\end{equation}
On the other hand, we claim 
\begin{equation}
\label{eqn2.4}
\left(\frac{d}{dt}\right)^n f(qe^t) = \sum_{j=1}^n q^je^{jt}c_{n,j}f^{(j)}(qe^t).
\end{equation}
When $n=1,$ this is just the chain rule applied to $f(qe^t).$  To pass from $n$ to $n+1,$ we note 
\begin{eqnarray*}
\left(\frac{d}{dt}\right)^{n+1} f(qe^t) 
&=& 
\frac{d}{dt}\left(\frac{d}{dt}\right)^{n} f(qe^t) \\
&=& 
\frac{d}{dt} \sum_{j=1}^n q^je^{jt}c_{n,j}f^{(j)}(qe^t) \\
&=& 
\sum_{j=1}^n j q^je^{jt}c_{n,j}f^{(j)}(qe^t) \\
&&
\qquad 
+\sum_{j=1}^n q^{j+1}e^{(j+1)t}c_{n,j}f^{(j+1)}(qe^t) \\
&=& 
\sum_{j=1}^{n+1} (jc_{n,j}+c_{n,j-1})q^je^{jt}f^{(j)}(qe^t) \\
&=& 
\sum_{j=1}^{n+1} c_{n+1,j}q^je^{jt}f^{(j)}(qe^t).
\end{eqnarray*}
Comparing (\ref{eqn2.4}) with $t=0$ to (\ref{eqn2.3}), we see that our lemma is established.  
\end{proof}
We now turn to the generating function for the Fishburn numbers as given by Zagier \cite[page 946]{Z}.  Namely, 
\begin{equation}
\label{eqn2.5}
F(1-q) = \sum_{n=0}^\infty \xi(n)q^n = \sum_{n=0}^\infty (1-q; 1-q)_n.
\end{equation}
To facilitate the study, we concentrate on 
\begin{equation}
\label{eqn2.6}
F(q) = \sum_{n=0}^\infty (q;q)_n
\end{equation}
and 
\begin{equation}
\label{eqn2.7}
F(q,N) = \sum_{n=0}^N (q;q)_n = \sum_{i=0}^{p-1} q^i A_p(N,i, q^p),
\end{equation}
where $A_p(N,i,q^p)$ is a polynomial in $q^p.$  
We note that if $\zeta$ is a $p^{th}$ root of unity 
\begin{equation}
\label{eqn2.8}
F(\zeta) = F(\zeta, m) = F(\zeta, p-1)
\end{equation}
for all $m\geq p.$  Furthermore, 
\begin{equation}
\label{eqn2.9}
\left(q\frac{d}{dq}\right)^r F(q)\biggr\rvert_{q=\zeta} = \left(q\frac{d}{dq}\right)^r F(q, m)\biggr\rvert_{q=\zeta}  = \left(q\frac{d}{dq}\right)^r F(q, (r+1)p-1)\biggr\rvert_{q=\zeta}
\end{equation}
for all $m\geq (r+1)p$ because $(1-q^p)^{r+1}$ divides $(q;q)_j$ for all $j\geq (r+1)p.$  

Similarly, for all $m\geq (r+1)p,$ 
\begin{equation}
\label{eqn2.10}
F^{(r)}(q)\biggr\rvert_{q=\zeta} = F^{(r)}(q,m)\biggr\rvert_{q=\zeta} = F^{(r)}(q,(r+1)p-1)\biggr\rvert_{q=\zeta}.
\end{equation}
In the next lemma, we require a Stirling--like array of numbers $C_{N,i,j}(p)$ given by $C_{N,i,0}(p) = i^N$ $(C_{0,0,0}(p) = 1)$, $C_{N,i,N+1}(p) = 0,$ and for $1\leq j\leq N,$ 
\begin{equation}
\label{eqn2.11}
C_{N+1,i,j}(p) = (i+jp)C_{N,i,j}(p)+pC_{N,i,j-1}(p).
\end{equation}

\begin{lemma}
\label{lemma4}
$$
\left(q\frac{d}{dq}\right)^NF(q,n) = \sum_{j=0}^N \sum_{i=0}^{p-1} C_{N,i,j}(p)q^{i+jp}A_p^{(j)}(n,i,q^p).
$$
\end{lemma}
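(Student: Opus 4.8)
The plan is to establish the formula by induction on $N$, exactly mirroring the structure of the proofs of Lemma~\ref{lemma2} and Lemma~\ref{lemma3}. The base case $N=0$ is immediate from the definition $F(q,n) = \sum_{i=0}^{p-1} q^i A_p(n,i,q^p)$ in (\ref{eqn2.7}), together with the initial conditions $C_{0,i,0}(p) = i^0 = 1$ and $C_{0,i,1}(p) = 0$: applying $\left(q\frac{d}{dq}\right)^0$ does nothing, and the right-hand side collapses to $\sum_{i=0}^{p-1} q^i A_p(n,i,q^p)$. (One should note the minor subtlety that the stated recurrence gives $C_{0,0,0}(p)=1$ as a special case but $C_{0,i,0}(p) = i^0 = 1$ for all $i$, so the base case is uniform.)

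For the inductive step, I would apply $q\frac{d}{dq}$ to the asserted expression for $\left(q\frac{d}{dq}\right)^N F(q,n)$ and regroup. Writing $B_{i,j} := A_p^{(j)}(n,i,q^p)$ for brevity, each summand is $C_{N,i,j}(p)\, q^{i+jp}\, B_{i,j}$. Applying $q\frac{d}{dq}$ produces two terms: the operator hits the monomial $q^{i+jp}$ and yields a factor $(i+jp)$, giving $(i+jp)C_{N,i,j}(p)\,q^{i+jp}B_{i,j}$; or it hits $B_{i,j} = A_p^{(j)}(n,i,q^p)$, and since $B_{i,j}$ is a function of $q^p$, the chain rule gives $q\frac{d}{dq}B_{i,j} = p q^p A_p^{(j+1)}(n,i,q^p) = p q^p B_{i,j+1}$, so this term contributes $p\,C_{N,i,j}(p)\,q^{i+(j+1)p}B_{i,j+1}$. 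Reindexing the second sum $j \mapsto j-1$ and collecting the coefficient of $q^{i+jp}B_{i,j}$, I obtain $(i+jp)C_{N,i,j}(p) + pC_{N,i,j-1}(p)$, which is precisely $C_{N+1,i,j}(p)$ by (\ref{eqn2.11}). Checking the boundary terms $j=0$ and $j=N+1$ against the conventions $C_{N,i,N+1}(p)=0$ and the $i^N$ initialization closes the induction.

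I expect the main point requiring care — not so much an obstacle as a place to be precise — is the bookkeeping of the index ranges when one term shifts $j$ to $j+1$: the first family of terms runs over $0 \le j \le N$ and the second, after reindexing, over $1 \le j \le N+1$, so one must verify that the $j=0$ term (which only receives a contribution of the form $i \cdot C_{N,i,0}(p) = i \cdot i^N = i^{N+1} = C_{N+1,i,0}(p)$) and the $j=N+1$ term (which only receives $p\,C_{N,i,N}(p) = p\,C_{N,i,(N+1)-1}(p) = C_{N+1,i,N+1}(p)$, consistent since $C_{N,i,N+1}(p)=0$) both fit the recurrence at the endpoints. Everything else is the routine computation already rehearsed in Lemmas~\ref{lemma2} and~\ref{lemma3}, with the monomials $q^{i+jp}$ playing the role that $q^j$ played there and the dilation by $p$ in the argument $q^p$ accounting for the extra factors of $p$ in the recurrence.
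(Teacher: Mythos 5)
Your proof is correct and follows essentially the same route as the paper's: induction on $N$ with the base case read off from (\ref{eqn2.7}), and the inductive step obtained by applying $q\frac{d}{dq}$, splitting via the product rule into the $(i+jp)$ term and the $pq^p$ chain-rule term, reindexing, and invoking the recurrence (\ref{eqn2.11}). Your explicit check of the boundary cases $j=0$ and $j=N+1$ is a slightly more careful treatment of a detail the paper passes over silently.
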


\begin{proof}
In light of the fact that $C_{0,i,0}(p) = 1$ for all $i,$ the $N=0$ assertion is 
$$
F(q,n) = \sum_{i=0}^{p-1} q^i A_p(n,i,q^p),
$$
which is just the definition of the $A$'s given in (\ref{eqn2.7}).  To pass from $N$ to $N+1,$ we note 
\begin{eqnarray*}
\left(q\frac{d}{dq}\right)^{N+1} F(q,n) 
&=& 
q\frac{d}{dq}\sum_{j=0}^N\sum_{i=0}^{p-1}C_{N,i,j}(p)q^{i+jp}A_p^{(j)}(n,i,q^p) \\
&=& 
\sum_{j=0}^N\sum_{i=0}^{p-1}C_{N,i,j}(p)(i+jp)q^{i+jp}A_p^{(j)}(n,i,q^p) \\
&&
\qquad 
+ \sum_{j=0}^N\sum_{i=0}^{p-1}C_{N,i,j}(p)q^{i+jp}pq^pA_p^{(j+1)}(n,i,q^p) \\
&=& 
\sum_{j=0}^{N+1}\sum_{i=0}^{p-1}((i+jp)C_{N,i,j}(p) + pC_{N,i,j-1}(p)) q^{i+jp}A_p^{(j)}(n,i,q^p) \\
&=& 
\sum_{j=0}^{N+1}\sum_{i=0}^{p-1}C_{N+1,i,j}(p) q^{i+jp}A_p^{(j)}(n,i,q^p).
\end{eqnarray*}
\end{proof}

We now define, for any positive integer $p,$ two special sets of integers:  

\begin{equation}
\label{eqn2.12}
S(p) = \left\{ j \, \vert\,  0\leq j\leq p-1 \text{\ such that\ } n(3n-1)/2 \equiv j \pmod{p} \text{\ for some } n   \right\}
\end{equation}
and 
\begin{equation}
\label{eqn2.13}
T(p) = \left\{ k \, \vert\, 0\leq k\leq p-1 \text{\ such that\ } k \text{\ is larger than every element of $S(p)$}    \right\}.
\end{equation}
For example, for $p=11,$ we have 
$$
S(11) = \left\{0,1,2,4,5,7\right\} \text{\ \ \ and \ \ \ } T(11) = \left\{8,9,10\right\}.
$$

\begin{lemma}
\label{lemma5}
If $i \not\in S(p),$ then 
$$
A_p(pn-1,i,q) = (1-q)^n\alpha_p(n,i,q)
$$
where the $\alpha_p(n,i,q)$ are polynomials in ${\Bbb Z}[q].$  
\end{lemma}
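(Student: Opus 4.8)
The plan is to extract arithmetic information about $A_p(pn-1,i,q)$ from the behavior of $F(q,m)$ and its $q\frac{d}{dq}$-derivatives at $p^{th}$ roots of unity, using Lemma \ref{lemma1} as the bridge. The key input is the classical pentagonal number theorem: expanding $(q;q)_n$ does not literally give Euler's product, but evaluating at a $p^{th}$ root of unity $\zeta$ and using that $(1-q^p)$ eventually divides the factors, one finds that $F(\zeta,(r+1)p-1)$ — and more generally the derivatives in \eqref{eqn2.9} and \eqref{eqn2.10} — can be written so that the relevant values are supported on exponents that are pentagonal numbers modulo $p$; that is, they lie in the set $S(p)$. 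This is exactly the hypothesis needed to apply Lemma \ref{lemma1}.

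The steps I would carry out, in order: First, fix $i\notin S(p)$ and set $N$ large; by \eqref{eqn2.8}--\eqref{eqn2.10} it suffices to work with the polynomial $F(q,pn-1)$ and its first $n$ or so $q\frac{d}{dq}$-derivatives evaluated at each $p^{th}$ root of unity $\zeta$. Second, invoke Lemma \ref{lemma4}, which expands $\left(q\frac{d}{dq}\right)^N F(q,pn-1)$ in terms of the $A_p^{(j)}(pn-1,i,q^p)$; at $q=\zeta$ the factors $q^{i+jp}$ become $\zeta^i$, so we get a system of linear equations (as $\zeta$ ranges over the $p^{th}$ roots of unity, and $N$ over $0,1,\dots,n-1$) relating the numbers $A_p^{(j)}(pn-1,i,1)$ to the values $F^{(j)}(\zeta,pn-1)$. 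Third, observe via the pentagonal number theorem that for each $N<n$ the quantity $\left(q\frac{d}{dq}\right)^N F(q,pn-1)\big|_{q=\zeta}$ is a $\mathbb Z$-linear combination of $\zeta^{\lambda}$ with $\lambda$ ranging over pentagonal numbers — this is the place where the combinatorial structure of $F$ enters and where one must be careful about which derivatives still see only pentagonal exponents. Fourth, apply Lemma \ref{lemma1} (with $f$ an appropriate derivative of $F(q,pn-1)$, whose $\phi_j$'s are built from the $A_p^{(j)}$'s): since $i\notin S(p)$, every coefficient attached to $q^i$ in the dissection vanishes at $q=1$, forcing $A_p^{(j)}(pn-1,i,1)=0$ for $j=0,1,\dots,n-1$. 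Fifth, translate "$A_p(pn-1,i,q)$ and its first $n-1$ derivatives all vanish at $q=1$" into "$(1-q)^n$ divides $A_p(pn-1,i,q)$," which gives the factorization $A_p(pn-1,i,q)=(1-q)^n\alpha_p(n,i,q)$; integrality of $\alpha_p$ follows because $A_p$ has integer coefficients and $(1-q)^n$ is monic.

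The main obstacle I expect is Step 3 together with making Step 2's linear algebra invertible: one must verify that taking up to $n-1$ powers of $q\frac{d}{dq}$ of $F(q,pn-1)$ does not destroy the "supported on pentagonal exponents" property needed by Lemma \ref{lemma1}, and simultaneously that the resulting $\binom{\text{derivative order}}{} \times \binom{\text{roots of unity}}{}$ system genuinely pins down $A_p^{(j)}(pn-1,i,1)$ rather than only some combination of them — this is where the Stirling-like recurrence \eqref{eqn2.11} for $C_{N,i,j}(p)$ and the nonvanishing of the relevant determinant (an analogue of the Vandermonde-type determinant $p$ appearing in Lemma \ref{lemma1}) will have to be controlled. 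Everything else is bookkeeping with Leibniz-type rules already established in Lemmas \ref{lemma2}--\ref{lemma4}.
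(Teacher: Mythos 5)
Your overall architecture matches the paper's: dissect $F(q,pn-1)$ as in (\ref{eqn2.7}), expand the iterated $q\frac{d}{dq}$ derivatives via Lemma \ref{lemma4}, evaluate at $p^{th}$ roots of unity, apply Lemma \ref{lemma1} to kill the coefficients attached to $\zeta^i$ with $i\notin S(p)$, and convert the vanishing of $A_p^{(j)}(pn-1,i,1)$ for $0\le j<n$ into divisibility by $(1-q)^n$ (integrality of $\alpha_p$ being clear since $1-q$ is monic up to sign). Your worry about whether the linear system pins down the individual $A_p^{(j)}$'s is resolved exactly as you suspect, and more easily than by a determinant computation: the paper inducts on the derivative order $\nu$, so the system is triangular, the terms with $j<\nu$ and $i\notin S(p)$ vanish by the induction hypothesis, the surviving non‑pentagonal contribution is the single term $C_{\nu,i,\nu}(p)\zeta^iA_p^{(\nu)}(p(\nu+1)-1,i,1)$, and the recurrence (\ref{eqn2.11}) gives $C_{\nu,i,\nu}(p)=p^{\nu}\neq 0$.

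The genuine gap is your Step 3, which you correctly identify as the crux but do not establish, and for which your proposed mechanism would not suffice. The needed fact is that $b_\nu(\zeta)=\left(q\frac{d}{dq}\right)^{\nu}F(q)\bigr\rvert_{q=\zeta}$ is a linear combination of $\zeta^{\lambda}$ with $\lambda$ pentagonal \emph{and with coefficients independent of $\zeta$} (the latter is also a hypothesis of Lemma \ref{lemma1}). This does not follow from the classical pentagonal number theorem, nor from the observation that $(1-q^p)^{r+1}$ divides the tail factors $(q;q)_j$: those facts only justify truncating to $F(q,(\nu+1)p-1)$ as in (\ref{eqn2.9}); they say nothing about which powers of $\zeta$ survive. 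Already for $\nu=0$ there is no elementary reason why $\sum_{n=0}^{p-1}(\zeta;\zeta)_n$ should be supported on pentagonal exponents. The paper imports this from Zagier's ``strange identity'': the expansion $F(\zeta e^t)=\sum_n b_n(\zeta)t^n/n!$ of (\ref{eqn2.15}) has coefficients given explicitly by (\ref{eqn2.16}) in terms of Bernoulli polynomials twisted by $\chi(m)=\left(\frac{12}{m}\right)$, and the support of $\chi$ on $m\equiv\pm1\pmod 6$ forces the exponents $(m^2-1)/24$ to be pentagonal with coefficients $(-1)^m$ independent of $\zeta$. Without citing or reproving this result of Zagier your argument does not close; with it, your outline essentially becomes the paper's proof.
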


\begin{proof}
This result is equivalent to the assertion that for $0\leq j < n,$ 
$$
A_p^{(j)}(pn-1,i,1) = 0,
$$
and by (\ref{eqn2.10}) we need only prove for $j\geq 0,$ 
\begin{equation}
\label{eqn2.14}
A_p^{(j)}((j+1)p-1,i,1) = 0
\end{equation}
because $n\geq (j+1).$  

We proceed to prove (\ref{eqn2.14}) by induction on $j.$  When $j=0,$ we only need show that if $i\not\in S(p),$ 
$$A_p(p-1,i,1)=0.$$
Following \cite[Section 5]{Z}, we define (where $\zeta$ is now an $N^{th}$ root of unity)
\begin{align}
  F(\zeta e^t) &= \sum_{n=0}^\infty \frac{b_n(\zeta)t^n}{n!} \label{eqn2.15} \\
    &= e^{t/24}\sum_{n=0}^\infty \frac{c_n(\zeta)t^n}{24^nn!} \nonumber \\
    & = \sum_{M=0}^\infty \frac{t^M}{24^MM!}\sum_{n=0}^M \binom{M}{n}c_n(\zeta), \nonumber
\end{align}
where we have replaced Zagier's $\xi$ with $\zeta$ to avoid confusion with $\xi(n).$  
In \cite[Section 5]{Z}, we see that 
\begin{equation}
\label{eqn2.16}
c_n(\zeta) = \frac{(-1)^nN^{2n+1}}{2n+2}\sum_{m=1}^{N/2} \chi(m)\zeta^{(m^2-1)/24}B_{2n+2}\left( \frac{m}{N} \right),
\end{equation}
where the $B$'s are Bernoulli polynomials and $\chi(m) = \left( \frac{12}{m} \right).$  Note that the only non--zero terms in the sum in (\ref{eqn2.16}) have 
\begin{equation}
\label{eqn2.17}
\zeta^{((6m\pm 1)^2-1)/24}\chi(6m\pm 1) = (-1)^m\zeta^{m(3m\pm 1)/2},
\end{equation}
i.e., $c_n(\zeta)$ is a linear combination of powers of $\zeta$ where each exponent is a pentagonal number.  Hence, by (\ref{eqn2.15}) we see that $b_n(\zeta)$ is a linear combination of powers of $\zeta$ where each exponent is a pentagonal number.  

Hence, if $\zeta$ is now a $p^{th}$ root of unity, 
\begin{eqnarray*}
F(\zeta) 
&=& 
F(\zeta, p-1) \\
&=& 
b_0(\zeta) \\
&=& \sum_\lambda c_\lambda \zeta^\lambda,
\end{eqnarray*}
where the sum over $\lambda$ is restricted to a subset of the pentagonal numbers. On the other hand, 
\begin{eqnarray*}
F(\zeta)
&=& 
F(\zeta, p-1) \\
&=& 
\sum_{i=0}^{p-1} \zeta^iA_p(p-1,i,1).
\end{eqnarray*}
Hence, by Lemma \ref{lemma1}, for $i\not\in S(p),$ 
$$
A_p(p-1,i,1) = 0
$$
which is (\ref{eqn2.14}) when $j=0.$  
Now let us assume that 
\begin{equation}
\label{eqn2.18}
A_p^{(j)}(p(j+1)-1,i,1) = 0
\end{equation}
for $0\leq j < \nu < n.$  
By Lemma \ref{lemma4}, 
\begin{equation}
\label{eqn2.19}
\left(q\frac{d}{dq}\right)^\nu F(q,p(\nu+1)-1) = \sum_{j=0}^\nu \sum_{i=0}^{p-1} C_{\nu, i, j}(p)\zeta^iA_p^{(j)}(p(\nu +1)-1,i,1).
\end{equation}
But for $j < \nu,$ 
$$
A_p^{(j)}(p(\nu + 1), i, 1) = A_p^{(j)}(p(j+1)-1,i,1) = 0.
$$
Hence the only terms in the sum in (\ref{eqn2.19}) where $\zeta$ is raised to a non--pentagonal power, $i,$ arise from the terms with $j=\nu,$ namely 
\begin{equation}
\label{eqn2.20}
C_{\nu, i, \nu}(p)\zeta^iA_p^{(\nu)}(p(\nu +1)-1,i,1),
\end{equation}
and we note that $C_{\nu,i,\nu}(p) \neq 0.$  

Applying Lemma \ref{lemma3} to the left side of (\ref{eqn2.19}), we see that by (\ref{eqn2.15}) 
\begin{align}
b_{\nu}(\zeta)  &= \left(  q\frac{d}{dq}\right)^\nu F(q)\biggr\rvert_{q=\zeta} \label{eqn2.21} \\
    &= \left(  q\frac{d}{dq}\right)^\nu F(q, (\nu + 1)p-1)\nonumber \\
    & = \sum_{j=0}^\nu \sum_{i=0}^{p-1} C_{\nu,i,j}(p)\zeta^iA_p^{(j)}(p(\nu + 1)-1,i,1).  \nonumber
\end{align}
Recall that $b_\nu(\zeta)$ is a linear combination of powers of $\zeta$ where the exponents are pentagonal numbers.  Hence the expression given in (\ref{eqn2.20}) must be zero by Lemma \ref{lemma1}.  Therefore, 
$$
A_p^{(\nu )}(p(\nu + 1)-1,i,1) = 0,
$$
and this proves (\ref{eqn2.14}) and thus proves Lemma \ref{lemma5}.  
\end{proof}

\section{The Main Theorem}
\label{MainThm}
We recall from (\ref{eqn2.5}) that 
\begin{eqnarray*}
\sum_{n=0}^\infty \xi(n)q^n 
&=& 
\sum_{j=0}^\infty (1-q;1-q)_j \\
&=& 
1+\sum_{j=1}^\infty  \prod_{i=1}^j \sum_{h=1}^i (-1)^{h-1}q^h\binom{i}{h} \\
&=& 
1+\sum_{j=1}^\infty (q^j + O(q^{j+1})).
\end{eqnarray*}
Hence, 
\begin{equation}
\label{eqn3.1}
\sum_{n=0}^\infty \xi(n)q^n = F(1-q,N) + O(q^{N+1}).  
\end{equation}
We are now in a position to state and prove the main theorem of this paper.  
\begin{theorem}
\label{mainthm}
If $p$ is a prime and $i\in T(p)$ (as defined in (\ref{eqn2.13})), then for all $n\geq 0,$ 
$$\xi(pn+i) \equiv 0 \pmod{p}.$$
\end{theorem}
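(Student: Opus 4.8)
The plan is to read $\xi(pn+i)$ off the truncated generating function (\ref{eqn3.1}), substitute $q\mapsto 1-q$ into the $p$-dissection (\ref{eqn2.7}), use Lemma \ref{lemma5} to throw away all but a small piece of the resulting sum modulo $p$, and then kill that piece with Lucas' theorem. Fix an odd prime $p$ (the case $p=2$ is vacuous, since $T(2)=\varnothing$), fix $i\in T(p)$ and $n\ge 0$, and choose any integer $M\ge n+1$, so that $pn+i\le pM-1$. Taking $N=pM-1$ in (\ref{eqn3.1}), the coefficient of $q^{pn+i}$ in $F(1-q,pM-1)$ equals $\xi(pn+i)$, and from (\ref{eqn2.7})
\[
F(1-q,pM-1)=\sum_{a=0}^{p-1}(1-q)^a\,A_p\bigl(pM-1,a,(1-q)^p\bigr).
\]
I would split this sum according to whether $a\in S(p)$ or $a\notin S(p)$.

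For the terms with $a\notin S(p)$, Lemma \ref{lemma5}, applied with $M$ and $a$ in place of its $n$ and $i$, gives $A_p(pM-1,a,q)=(1-q)^M\alpha_p(M,a,q)$ with $\alpha_p(M,a,q)\in{\Bbb Z}[q]$, and hence $A_p(pM-1,a,(1-q)^p)=\bigl(1-(1-q)^p\bigr)^M\alpha_p(M,a,(1-q)^p)$. Since $p$ is odd, the Frobenius congruence gives $(1-q)^p\equiv 1-q^p\pmod p$, so $1-(1-q)^p\equiv q^p\pmod p$ in ${\Bbb Z}[q]$ and therefore $\bigl(1-(1-q)^p\bigr)^M\equiv q^{pM}\pmod p$. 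Thus each term with $a\notin S(p)$ is congruent modulo $p$ to a polynomial divisible by $q^{pM}$, so it contributes nothing to the coefficient of $q^{pn+i}$. This reduces the problem to showing that the coefficient of $q^{pn+i}$ in $\sum_{a\in S(p)}(1-q)^a A_p(pM-1,a,(1-q)^p)$ is $\equiv 0\pmod p$.

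For the final step, I would use that $A_p(pM-1,a,(1-q)^p)$ is a polynomial in $(1-q)^p$ with integer coefficients, so each summand $(1-q)^a A_p(pM-1,a,(1-q)^p)$ is a ${\Bbb Z}$-linear combination of powers $(1-q)^{a+pk}$ with $k\ge 0$, whose $q^{pn+i}$-coefficient is $(-1)^{pn+i}\binom{a+pk}{pn+i}$. Hence $\xi(pn+i)$ is congruent modulo $p$ to a ${\Bbb Z}$-linear combination of binomial coefficients $\binom{a+pk}{pn+i}$ with $a\in S(p)$. Since $0\le a\le p-1$ and $0\le i\le p-1$, the base-$p$ units digit of $a+pk$ is $a$ and that of $pn+i$ is $i$, so Lucas' theorem yields $\binom{a+pk}{pn+i}\equiv\binom{a}{i}\binom{k}{n}\equiv 0\pmod p$, because $a\le\max S(p)<i$ forces $\binom{a}{i}=0$. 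Therefore $\xi(pn+i)\equiv 0\pmod p$.

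The only place needing care is the bookkeeping of the two simultaneous reductions — modulo $p$ and modulo $q^{pM}$ — that annihilate the $a\notin S(p)$ terms, together with the observation that the defining property of $T(p)$, namely $i>\max S(p)$, is used in exactly one spot: it is what makes every surviving binomial coefficient $\binom{a}{i}$ vanish in the Lucas step. Beyond this, the argument is routine, and I do not anticipate a substantive obstacle.
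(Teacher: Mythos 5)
Your proposal is correct and follows essentially the same route as the paper: truncate via (\ref{eqn3.1}), split the $p$-dissection (\ref{eqn2.7}) according to $a\in S(p)$ or not, kill the $a\notin S(p)$ terms with Lemma \ref{lemma5} and the Frobenius congruence $1-(1-q)^p\equiv q^p\pmod p$, and kill the rest with Lucas' theorem using $i>\max S(p)$. The only (harmless) cosmetic difference is that you apply Lucas directly to $\binom{a+pk}{pn+i}$ rather than first replacing $(1-q)^p$ by $1-q^p$ inside $A_p$ as the paper does.
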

\begin{remark}
Congruences (\ref{mod5congs})--(\ref{mod19congs}) are the cases $p=5,7,11,17$ and $19$ of Theorem \ref{mainthm}.  
\end{remark}
\begin{proof}
We begin with a simple observation derived from Lucas's theorem for the congruence class of binomial coefficients modulo $p$ \cite[page 271]{D}.  Namely if $\pi$ is any integer congruent to a pentagonal number modulo $p,$ and $i\in T(p),$ then 
\begin{equation}
\label{eqn3.2}
\binom{\pi}{i} \equiv 0\pmod{p},
\end{equation}
because the final digit in the $p$--ary expansion of $\pi$ is smaller than $i$ because $i$ is in $T(p).$  

Now by Lemma \ref{lemma5}, we may write 
\begin{eqnarray*}
F(q,pn-1) 
&=& 
\sum_{i=0}^{p-1} q^i A_p(pn-1,i,q^p) \\
&=& 
\sum_{i=0 \atop i\in S(p)}^{p-1} q^i A_p(pn-1,i,q^p) + \sum_{i=0 \atop i\not\in S(p)}^{p-1} q^i (1-q^p)^n\alpha_p(n,i,q^p).
\end{eqnarray*}
So 
\begin{eqnarray*}
F(1-q,pn-1) 
&=& 
\sum_{i=0 \atop i\in S(p)}^{p-1} (1-q)^i A_p(pn-1,i,(1-q)^p) \\
&&
\qquad 
+ \sum_{i=0 \atop i\not\in S(p)}^{p-1} (1-q)^i (1-(1-q)^p)^n\alpha_p(n,i,(1-q)^p) \\
&:=& 
\Sigma_1 + \Sigma_2.
\end{eqnarray*}
Now modulo $p,$ 
\begin{eqnarray*}
\Sigma_2 
&\equiv & 
\sum_{i=0 \atop i\not\in S(p)}^{p-1} (1-q)^i q^{pn}\alpha_p(n,i,1) \\
&=& 
O(q^{pn}).
\end{eqnarray*}
Therefore, modulo $p,$ 
$$
F(1-q,pn-1) \equiv \sum_{i=0 \atop i\in S(p)}^{p-1} (1-q)^i A_p(pn-1,i,1-q^p) \pmod{p}.\\
$$
Let us look at the terms in this sum where $q$ is raised to a power that is congruent to an element of $T(p).$  Such a term must arise from the expansion of some $(1-q)^i$ where $i\in S(p)$ because $A_p(pn-1,1,1-q^p)$ is a polynomial in $q^p.$  

By (\ref{eqn3.2}) all such terms have a coefficient congruent to 0 modulo $p.$  Therefore, every term $q^j$ in $F(1-q,pn-1)$ where $j$ is congruent to an element of $T(p)$ must have a coefficient congruent to 0 modulo $p.$  

To conclude the proof, we let $n\to\infty.$  
\end{proof}

\section{An Infinite Set of Primes With Congruences}
\label{identify_primes} 
At this stage, one might ask whether one can identify an infinite set of primes $p$ for which congruences such as those described in Theorem \ref{mainthm} are found.  The answer to this question can be answered affirmatively.  

\begin{theorem}
Let $R = \{5,7,10,11,14,15,17,19,20,21,22\}.$  (The elements of $R$ are those numbers $r,$  $0 < r < 23,$ such that $\left( \frac{r}{23} \right) = -1.$)  Let $p$ be a prime of the form $p = 23k + r$ for some nonnegative integer $k$ and some $r \in R.$  Then $T(p)$ is not empty, i.e., at least one congruence such as those described in Theorem \ref{mainthm} must hold modulo $p.$  
\end{theorem}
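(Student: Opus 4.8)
The plan is to reduce the non-emptiness of $T(p)$ to the value of a single Legendre symbol. First note that $0 \in S(p)$ always, so $T(p)$ is non-empty if and only if $\max S(p) < p-1$, i.e. if and only if $p-1 \notin S(p)$, i.e. if and only if $-1$ is not congruent modulo $p$ to any pentagonal number $n(3n-1)/2$.

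Next I would complete the square. Every prime of the form $p = 23k+r$ with $r \in R$ satisfies $p \geq 5$, so $2,3,6$ are units modulo $p$ and $n \mapsto 6n-1$ is a bijection of ${\Bbb Z}/p{\Bbb Z}$. From the polynomial identity $12n(3n-1) + 24 = (6n-1)^2 + 23$ one gets, for such $p$, that $-1 \equiv n(3n-1)/2 \pmod{p}$ has a solution $n$ if and only if $(6n-1)^2 \equiv -23 \pmod{p}$ does, which by the bijectivity above happens if and only if $-23$ is a square modulo $p$ — where one also allows the degenerate possibility $p \mid 23$, i.e. $p = 23$, which does not occur since $0 \notin R$. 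Hence $T(p) \neq \emptyset$ exactly when $\left( \frac{-23}{p} \right) = -1$.

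Finally I would invoke quadratic reciprocity. Since $23 \equiv 3 \pmod{4}$, reciprocity gives $\left( \frac{23}{p} \right) = (-1)^{(p-1)/2}\left( \frac{p}{23} \right) = \left( \frac{-1}{p} \right)\left( \frac{p}{23} \right)$ for every odd prime $p \neq 23$, whence $\left( \frac{-23}{p} \right) = \left( \frac{-1}{p} \right)\left( \frac{23}{p} \right) = \left( \frac{p}{23} \right)$. As $p \equiv r \pmod{23}$, this equals $\left( \frac{r}{23} \right)$; the squares modulo $23$ are exactly $1,2,3,4,6,8,9,12,13,16,18$, so $R$ is precisely the set of non-residues modulo $23$ and $\left( \frac{r}{23} \right) = -1$ for each $r \in R$. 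Therefore $-23$ is a non-residue modulo $p$, no pentagonal number is $\equiv -1 \pmod{p}$, and $T(p)$ is non-empty.

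The computation is short throughout; the step I would be most careful with is the Legendre-symbol bookkeeping — pinning down the reciprocity sign for the prime $23 \equiv 3 \pmod{4}$ and verifying that $R$ really is the non-residue class modulo $23$. Everything else is just completing the square together with the trivial remark that $0 \in S(p)$.
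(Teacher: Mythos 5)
Your proposal is correct and follows essentially the same route as the paper: reduce non-emptiness of $T(p)$ to the absence of a pentagonal number $\equiv -1 \pmod{p}$, complete the square to get $(6n-1)^2 \equiv -23 \pmod{p}$, and evaluate $\left( \frac{-23}{p} \right) = \left( \frac{r}{23} \right) = -1$ via quadratic reciprocity. Your added care about the bijectivity of $n \mapsto 6n-1$ and the exclusion of $p = 23$ only tightens the same argument.
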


\begin{remark}
From the Prime Number Theorem for primes in arithmetic progression, we see that, asymptotically, $T(p)$ is not empty for half of the primes and $T(p)$ equals the empty set for half of the primes.  
\end{remark}

\begin{proof}
Assume $p$ is a prime for which $T(p)$ {\bf is} empty.  That means there is a pentagonal number which is congruent to $- 1$ modulo $p.$  Then $n(3n-1)/2 \equiv -1 \pmod{p}$ for some integer $n.$  By completing the square we then obtain $(6n-1)^2 \equiv -23 \pmod{p}.$  Thus, by contrapositive, if we know that $-23$ is a quadratic nonresidue modulo $p,$ then we know that such a pentagonal number does not exist (which means $T(p)$ is not empty). 

Thus, if $\left( \frac{-23}{p} \right) = -1,$ then $T(p)$ is not empty.  But thanks to properties of the Legendre symbol, we know 
\begin{eqnarray*}
\left( \frac{-23}{p} \right)
&=&
\left( \frac{-1}{p} \right)\left( \frac{23}{p} \right) \\
&=& 
(-1)^{\frac{p-1}{2}}(-1)^{\frac{23-1}{2}\frac{p-1}{2}}\left( \frac{p}{23} \right) \text{\ \   by quadratic reciprocity} \\
&=& 
(-1)^{\frac{12(p-1)}{2}}\left( \frac{r}{23} \right)  \text{\ \ since $p = 23k + r$} \\
&=& 
\left( \frac{r}{23} \right)
\end{eqnarray*}
and we want this value to be $-1.$  The theorem then follows by the nature of the construction of $R.$  
\end{proof}
Thus, we clearly have infinitely many primes $p$ for which the Fishburn numbers will exhibit at least one congruence modulo $p.$  

\section{Conclusion}
There are many natural open questions that could be answered at this point.  

\begin{itemize}

\item{} First, we believe that Theorem \ref{mainthm} lists all the congruences of the form $\xi(pn+b) \equiv 0 \pmod{p},$ but we have not proved this at this time.  

\vskip .2in

\item{} Numerical evidence seems to indicate that Theorem \ref{mainthm} can be strengthened.  Namely, for certain values of $j > 1$ and certain primes $p,$ it appears that 
$$
\xi(p^j n + b) \equiv 0 \pmod{p^j}
$$
for certain values $b$ and all $n.$  

\vskip .2in

\item{}  Numerical evidence suggests that Lemma \ref{lemma5} could be strengthened as follows:  If $i\not\in S(p),$ then 
$$
A_p(pn-1,i,q) = (q;q)_n\beta_p(n,i,q)
$$
for some polynomial $\beta_p(n,i,q).$  That is to say, in Lemma 5, it was proved that $(1-q)^n$ divides $A_p(pn-1,i,q)$; it appears that the factor $(1-q)^n$ can be strengthened to $(q;q)_n.$

\vskip .2in

\item{}  With an eye towards the recent work of Andrews and Jel{\'i}nek \cite{AJ}, consider the power series given by 
\begin{eqnarray*}
\sum_{n=0}^\infty a(n)q^n 
&:=&
 \sum_{n=0}^\infty \left(\frac{1}{1-q},\frac{1}{1-q}\right)_n 
\end{eqnarray*}
which begins
\begin{equation*}
\qquad \qquad 1 - q + q^2 - 2 q^3 + 5 q^4 - 16 q^5 + 61 q^6 - 271 q^7 + 1372 q^8 - 7795 q^9 + \dots 
\end{equation*}
We conjecture that, for all $n\geq 0,$ $a(5n+4) \equiv 0 \pmod{5}.$  

\end{itemize}
  
\bibliographystyle{amsplain}

\end{document}